\journal{}
\newcommand{\set}[1]{\left\{#1\right\}}
\newcommand{\abs}[1]{\left|#1\right|}
\newcommand{\p}{\partial}
\newcommand{\mP}{\mathbf{P}}
\newcommand{\mU}{\mathbf{U}}
\newcommand{\mV}{\mathbf{V}}
\newcommand{\mc}{\mathbf{c}}
\newcommand{\mf}{\mathbf{f}}
\newcommand{\mn}{\mathbf{n}}
\newcommand{\mx}{\mathbf{x}}
\newcommand{\my}{\mathbf{y}}
\newcommand{\mz}{\mathbf{z}}
\newcommand{\vg}{\boldsymbol{\gamma}}
\newcommand{\vn}{\boldsymbol{\nu}}
\newcommand{\vt}{\boldsymbol{\theta}}
\newcommand{\vx}{\boldsymbol{\xi}}
\newtheorem{thm}{Theorem}[section]
\newtheorem{lem}[thm]{Lemma}
\begin{document}

\begin{frontmatter}



\title{Certain properties of MUSIC-type imaging functional in inverse scattering from an open, sound-hard arc}

\author{Won-Kwang Park}
\ead{parkwk@kookmin.ac.kr}
\address{Department of Mathematics, Kookmin University, Seoul, 136-702, Korea.}

\begin{abstract}
  This paper concerns mathematical formulation of well-known MUltiple SIgnal Classification (MUSIC)-type imaging functional in the inverse scattering problem by an open sound-hard arc. Based on the physical factorization of so-called Multi-Static Response (MSR) matrix and the structure of left-singular vectors liked to the non-zero singular values of MSR matrix, we construct a relationship between imaging functional and Bessel function of order $1$ of the first kind. We then expound certain properties of MUSIC and present numerical results for a number of differently chosen smooth arcs.
\end{abstract}

\begin{keyword}
MUltiple SIgnal Classification (MUSIC) \sep sound-hard arc \sep Multi-Static Response (MSR) matrix \sep Bessel function \sep numerical results



\end{keyword}

\end{frontmatter}





\section{Introduction}
The main purpose of this paper is to identify mathematical structure of so-called MUltiple SIgnal Classification (MUSIC)-type imaging functional for two-dimensional perfectly conducting, arc-like smooth cracks at a fixed frequency. From our best knowledge, the mathematical modeling of time-harmonic inverse scattering problem from a sound-hard open arc has been considered in \cite{M2}. In this remarkable paper, a boundary integral equation approach has been concerned for solving the direct scattering problem a complete description of a numerical solution method including a rigorous convergence and error analysis is considered. Then, the inverse scattering problem related to retrieval of the sound-hard crack based on  Newton-type iterative scheme has been investigated in \cite{M1}. After the successful application, in many works \cite{AS1,KS1,L2,L1}, many authors have proposed various algorithms, most of which are based on Newton-type iteration schemes.

Generally, for a successful application of Newton-type iterative based scheme, a good initial guess that is close to the unknown crack must be applied in the beginning of iteration procedure in order to avoid the non-convergence phenomenon. For this reason, various non-iterative shape reconstruction algorithms. Specifically, MUltiple SIgnal Classification (MUSIC) has been shown to be feasible in various inverse scattering problems and generalized to the shape reconstruction of various kind of targets in two- and three-dimensional problems. Related works can be found in \cite{AJP,AGKPS,AIL1,AKLP,CZ,HSZ1,P-MUSIC1,PL1,PL3,S2,ZC} and references therein. In recent works \cite{JKP,JP2}, the structure of MUSIC was identified in full- and limited-view inverse scattering problem for imaging of sound-soft arc (Dirichlet boundary condition -- Transverse Magnetic polarization) but as to the sound-soft arc, little has been theoretically studied.

Motivated by the above, this paper concerns the identification of some properties of the MUSIC-type imaging algorithm in full-view inverse scattering from sound-soft arcs at a fixed single frequency. For this purpose, we investigate a relationship between the MUSIC-type imaging function and the Bessel function of order $1$ of the first kind in order to identify various properties of MUSIC.

This paper is constructed as follows. In section \ref{sec:2}, we briefly survey the two-dimensional direct scattering problem from sound-soft arc and introduce MUSIC-type imaging algorithm. In section \ref{sec:3}, we identify the structure of the MUSIC-type imaging function, and discover its certain properties. Some numerical experiments are shown in section \ref{sec:4} in order to support discovered properties. In section \ref{sec:5}, a short conclusion is presented.

\section{Direct scattering problem and MUSIC algorithm}\label{sec:2}
\subsection{Two-dimensional direct scattering problem from sound-hard arc}
We briefly introduce the two-dimensional direct scattering problem by a perfectly conducting crack, denoted by $\Gamma$. We recommend \cite{M1} for a detailed discussion. Suppose that $\Gamma$ is an oriented piecewise-smooth non-intersecting arc without a cusp that can be represented as
\begin{equation}\label{Crack}
  \Gamma:=\set{\vg(s):-1\leq s\leq1},
\end{equation}
where $\vg\in\mathcal{C}^{\infty}$ is injective.

Let $u(\mx;\vt)\in\mathcal{C}^2(\mathbb{R}^2\backslash\Gamma)\cap\mathcal{C}(\overline{\mathbb{R}^2\backslash\Gamma})$, which is continuous at $\vg(-1)$ and $\vg(1)$, be the total field that satisfying the following Helmholtz equation
\begin{equation}\label{Problem1}
  \triangle u(\mx;\vt)+k^2u(\mx;\vt)=0\quad\mbox{in}\quad\mathbb{R}^2\backslash\Gamma
\end{equation}
with following Neumann boundary condition
\begin{equation}\label{Problem2}
  \frac{\p u(\mx;\vt)}{\p\mn(\mx)}=0\quad\mbox{on}\quad\Gamma\backslash\set{\vg(-1),\vg(1)},
\end{equation}
where $\mn(\mx)$ is a unit normal to $\Gamma$ at $\mx$.

It is well-known that $u(\mx;\vt)$ can be decomposed as $u(\mx;\vt)=u_i(\mx;\vt)+u_s(\mx;\vt)$, where $u_i(\mx;\vt):=e^{ik\vt\cdot\mx}$ be the given plane-wave incident field with direction $\vt\in\mathbb{S}^1$ and strictly positive wave number $k$ and $u_s(\mx;\vt)$ be the unknown scattered field, which is demanded to satisfy the Sommerfeld radiation condition
\[\lim_{\abs{\mx}\to\infty}\sqrt{\abs{\mx}}\left(\frac{\p u_s(\mx;\vt)}{\p\abs{\mx}}-iku_s(\mx;\vt)\right)=0\]
uniformly into all directions $\hat{\mx}=\frac{\mx}{\abs{\mx}}$. Note that based on \cite{M1}, we do not require any edge condition for the behavior of $u(\mx;\vt)$ at $\vg(-1)$ and $\vg(1)$.

On the basis of result in \cite{M1}, $u_s(\mx;\vt)$ can be expressed as the following double-layer potential
\begin{equation}\label{DLP}
u_s(\mx;\vt)=\int_{\Gamma}\frac{\p\Phi(\mx,\my)}{\p\vn(\my)}\psi(\my,\vt)d\my\quad\mbox{for}\quad\mx\in\mathbb{R}^2\backslash\Gamma,
\end{equation}
where $-\psi(\mx,\vt)=u_{+}(\mx,\vt;k)-u_{-}(\mx,\vt;k)$, $\Phi(\mx,\my)$ is the two-dimensional fundamental solution to the Helmholtz equation
\[\Phi(\mx,\my):=\frac{i}{4}\mathrm{H}_0^1(k\abs{\mx-\my})\quad\mbox{for}\quad\mx\ne\my,\]
and $\mathrm{H}_0^1$ denotes the Hankel function of order zero and of the first kind.

Suppose that for all $\mx\in\Gamma\backslash\set{\vg(-1),\vg(1)}$, the limit of the following quantity exists:
\[\frac{\p u_{\pm}(\mx,\vt)}{\p\mn(\mx)}=\lim_{h\to+0}\nabla u(\mx\pm h\mn(\mx),\vt)\cdot\mn(\mx).\]

The far-field pattern $u_{\infty}(\hat{\mx},\vt;k)$ of the scattered field $u_{\mathrm{scat}}(\mx,\vt;k)$ is defined on the two-dimensional unit circle $\mathbb{S}^1$. It can be represented as
\[u_s(\mx,\vt)=\frac{e^{ik\abs{\mx}}}{\sqrt{\abs{\mx}}}\left\{u_{\infty}(\hat{\mx},\vt;k)+O\left(\frac{1}{\abs{\mx}}\right)\right\}\]
uniformly in all directions $\hat{\mx}=\mx/\abs{\mx}$ and $\abs{\mx}\longrightarrow\infty$. From the above representation and the asymptotic formula for the Hankel function, the far-field pattern can be written as
\begin{align}
\begin{aligned}\label{FFPN}
u_\infty(\hat{\mx},\vt;k)&=-\frac{e^{i\frac{\pi}{4}}}{\sqrt{8\pi k}}\int_{\Gamma}\frac{\p e^{-ik\hat{\mx}\cdot\my}}{\p\mn(\my)}\bigg(u_{+}(\my,\vt;k)-u_{-}(\my,\vt;k)\bigg)d\my\\
&=-\sqrt{\frac{k}{8\pi}}e^{-i\frac{\pi}{4}} \int_{\Gamma}\hat{\mx}\cdot\mn(\my)e^{-ik\hat{\mx}\cdot\my}\psi(\my,\vt;k)d\my.
\end{aligned}
\end{align}

\subsection{Introduction to MUSIC-type imaging}
We apply the far-field pattern formula (\ref{FFPN}) to introduce MUSIC-type imaging functional. Before starting, we assume that the crack is divided into $M$ different segments of size of the order of half the wavelength $\lambda/2$. With respect to the Rayleigh resolution limit, any detail less than one-half of the wavelength cannot be probed, and only one point, say $\my_m$ for $m=1,2,\cdots,M$, at each segment is expected to contribute at the image space of the response matrix $\mathbb{K}(k)$ \cite{ABC,AKLP,PL1,PL3}. Now, let us consider the eigenvalue structure of the MSR matrix, whose element is the collected far-field at observation number $j$ for the incident number $l$:
\[\mathbb{K}:=\bigg[K_{jl}(\hat{\mx}_j,\vt_l;k)\bigg]_{j,l=1}^{N}=\bigg[u_\infty(\hat{\mx}_j,\vt_l)\bigg]_{j,l=1}^{N}.\]
In this paper we assume that $\hat{\mx}_j=-\vt_j$ for $j=1,2,\cdots,N$, i.e., under the the coincide configuration of incident and observation directions, the MSR matrix $\mathbb{K}$ is complex symmetric. Therefore, $\mathbb{K}$ can be decomposed as
\begin{equation}\label{MSRN}
\mathbb{K}=\sqrt{\frac{k}{8\pi}}e^{-i\frac{\pi}{4}}\int_{\Gamma}\mathbb{P}_{\mathrm{N}}(\hat{\mx},\my)\mathbb{Q}_{\mathrm{N}}(\hat{\mx},\my)^Td\my,
\end{equation}
where $\mathbb{P}_{\mathrm{N}}(\hat{\mx},\my)$ is the illumination vector
\begin{align}
\begin{aligned}\label{VecEN}
\mathbb{P}_{\mathrm{N}}(\hat{\mx},\my)&=-\bigg[\hat{\mx}_1\cdot\mn(\my)e^{-ik\hat{\mx}_1\cdot \my},\hat{\mx}_2\cdot\mn(\my)e^{-ik\hat{\mx}_2\cdot \my},\cdots,\hat{\mx}_N\cdot\mn(\my)e^{-ik\hat{\mx}_N\cdot\my}\bigg]^T\bigg|_{\hat{\mx}_j=-\vt_j}\\
&=\bigg[\vt_1\cdot\mn(\my)e^{ik\vt_1\cdot\my},\vt_2\cdot\mn(\my)e^{ik\vt_2\cdot \my},\cdots,\vt_N\cdot\mn(\my)e^{ik\vt_N\cdot\my}\bigg]^T
\end{aligned}
\end{align}
and where $\mathbb{Q}_{\mathrm{N}}(\hat{\mx},\my)$ is the corresponding density vector
\begin{equation}
\mathbb{Q}_{\mathrm{N}}(\hat{\mx},\my)=\bigg[\psi(\my,\vt_1),\psi(\my,\vt_2),\cdots,\psi(\my,\vt_N)\bigg]^T.
\end{equation}

Formula (\ref{MSRN}) is the factorization of the MSR matrix that separates the known incoming plane-wave information from the unknown information similar to the Dirichlet boundary condition case. The range of $\mathbb{K}$ is determined by the span of the $\mathbb{P}_{\mathrm{N}}(\hat{\mx},\my)$ corresponding to $\Gamma$. Hence, on the basis of the result in \cite{HSZ1}, by applying a set of remaining singular vectors of $\mathbb{K}$, a signal subspace can be defined.

Let the singular value decomposition of the matrix $\mathbb{K}$ be
\[\mathbb{K}=\mathbb{UBV}^*=\sum_{m=1}^{M}\sigma_m\mU_m\mV_m^*,\]
where $\mU_m\in\mathbb{C}^{N\times1}$ are the left-singular vectors of $\mathbb{K}$ and $\sigma_m$ are nonnegative real-valued singular values such that
\[\sigma_1\geq\sigma_2\geq\cdots\geq\sigma_M>0\quad\mbox{and}\quad\sigma_j=0\quad\mbox{for}\quad j=M+1,M+2\cdots,N.\]
Alternatively, $\sigma_j$, for $j=M+1,M+2,\cdots,N$, could merely be very small, below the noise level of the system represented by $\mathbb{K}$. Then, the first $M$ columns of the matrix $\mathbb{U}$, $\set{\mU_1,\mU_2,\cdots,\mU_M}$, provide an orthonormal basis for $\mathbb{K}$ and the rest of the columns, $\set{\mU_{M+1},\mU_{M+2},\cdots,\mU_N}$, provides a basis for the null (or noise) space of $\mathbb{K}$. So, one can form the projection onto the null (or noise) subspace: this projection is given explicitly by
\begin{equation}
\mP_{\mbox{\tiny noise}}=\mathbb{I}-\sum_{m=1}^{M}\mU_m\mU_m^*,
\end{equation}
where $\mathbb{I}$ denotes $N\times N$ identity matrix.

On the basis of $\mathbb{P}_{\mathrm{N}}(\hat{\mx},\my)$ in (\ref{VecEN}), for any $\mz\in\mathbb{R}^2$ and $\mc_n\in\mathbb{R}^2\backslash\set{\mathbf{0}}$, define $\mf(\mz)\in\mathbb{C}^{N\times1}$ as
\begin{equation}\label{TestVectorN1}
\mf(\mz)=\bigg[(\mc_1\cdot\vt_1)e^{ik\vt_1\cdot\mz},(\mc_2\cdot\vt_2)e^{ik\vt_2\cdot\mz},\cdots,(\mc_N\cdot\vt_N)e^{ik\vt_N\cdot\mz}\bigg]^T.
\end{equation}
Then, there exists $N_0\in\mathbb{N}$ such that for any $N\geq N_0$, the following statement holds \cite{AK2}:
\[\mf(\mz)\in\mbox{Range}(\mathbb{K})\quad\mbox{if and only if}\quad\mz\in\set{\my_1,\my_2,\cdots,\my_M}.\]
This means that if a point $\mz$ satisfies $\mz\in\set{\my_1,\my_2,\cdots,\my_M}$ then $|\mP_{\mbox{\tiny noise}}(\mf(\mz))|=0$. Thus, an image of $\my_m\in\Gamma$, $m=1,2,\cdots,M$, can be obtained from computing
\begin{equation}\label{MUSICfunction}
\mathbb{W}(\mz)=\frac{1}{|\mP_{\mbox{\tiny noise}}(\mf(\mz))|}.
\end{equation}
The resulting plot of $\mathbb{W}(\mz)$ is expected to exhibit peaks of large (theoretically, $+\infty$) magnitude at the $\my_m\in\Gamma$.

\section{Mathematical structure and intrinsic properties of imaging functional}\label{sec:3}
On the basis of the results in \cite{PL1}, the selection of $\mathbf{c}_n$ is a strong prerequisite. The selection depends on the shape of the supporting curve $\Gamma$. Roughly speaking, $\mathbf{c}_n$ must be of the form $\mn(\mx_m)$ for $m=1,2,\cdots,M$. Unfortunately, we have no \textit{a priori} information of shape of $\Gamma$. Due to this reason, in \cite{HSZ1,PL1}, a large number of directions are applied in order to find an optimal vector $\mathbf{c}_n$ but this process requires large computational costs. Hence, motivated from recent work \cite{P-MUSIC1,P-SUB3}, we assume that $\mathbf{c}_n$ satisfies $\mathbf{c}_n\cdot\vt_n=1$ for all $n$. Correspondingly, instead of (\ref{TestVectorN1}), we apply
\begin{equation}\label{VecF}
  \mf(\mz)=\frac{1}{\sqrt{N}}\bigg[e^{ik\vt_1\cdot\mz},e^{ik\vt_2\cdot\mz},\cdots,e^{ik\vt_N\cdot\mz}\bigg]^T,
\end{equation}
and explore some properties of MUSIC-type imaging algorithm.

\subsection{Relationship with Bessel function of first order}
Before starting, we recall a useful relationship, which plays a key roll of our identification.
\begin{lem}\label{TheoremBessel}
  Assume that $\set{\vt_n:n=1,2,\cdots,N}$ spans $\mathbb{S}^1$. Then, for sufficiently large $N$, $\vx\in\mathbb{S}^1$, and $\mx\in\mathbb{R}^2$.
\[\frac{1}{N}\sum_{n=1}^{N}(\vt_n\cdot\vx)e^{ik\vt_n\cdot\mx}=\frac{1}{2\pi}\int_{\mathbb{S}^1}(\vt\cdot\vx)e^{ik\vt\cdot\mx}dS(\vt)=i\left(\frac{\mx}{|\mx|}\cdot\vx\right)J_1(k|\mx|),\]
where $J_\nu$ denotes the Bessel function of integer order $\nu$ of the first kind.
\end{lem}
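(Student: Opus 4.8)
The plan is to prove this in two stages: first establish the Riemann-sum approximation (the left equality), then evaluate the integral explicitly (the right equality), which reduces to a classical Bessel integral identity.

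For the first equality, I would observe that the function $\vt\mapsto(\vt\cdot\vx)e^{ik\vt\cdot\mx}$ is smooth (indeed real-analytic) on the compact manifold $\mathbb{S}^1$, and that $\frac{1}{N}\sum_{n=1}^N(\vt_n\cdot\vx)e^{ik\vt_n\cdot\mx}$ is a Riemann sum for $\frac{1}{2\pi}\int_{\mathbb{S}^1}(\vt\cdot\vx)e^{ik\vt\cdot\mx}dS(\vt)$, provided the directions $\set{\vt_n}$ become equidistributed on $\mathbb{S}^1$ as $N\to\infty$. Under the hypothesis that $\set{\vt_n}$ spans $\mathbb{S}^1$ in this asymptotic sense, uniform continuity of the integrand gives convergence, so for sufficiently large $N$ the two quantities agree up to a negligible error (this is exactly the sense in which the claimed equality — and the companion statement $\mf(\mz)\in\mathrm{Range}(\mathbb{K})$ iff $\mz\in\set{\my_m}$ — should be read). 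I would state this step briefly, since it is the standard quadrature argument used throughout the MUSIC literature cited (e.g. \cite{AK2,PL1}).

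For the second equality, I would parametrize $\vt=(\cos\theta,\sin\theta)$ and, without loss of generality (by rotational symmetry of the sphere), align coordinates so that $\mx=|\mx|(1,0)$, writing $\vx=(\cos\alpha,\sin\alpha)$ where $\alpha$ is the angle between $\vx$ and $\mx$, so that $\frac{\mx}{|\mx|}\cdot\vx=\cos\alpha$. Then $\vt\cdot\vx=\cos(\theta-\alpha)$ and $\vt\cdot\mx=|\mx|\cos\theta$, and the integral becomes
\[
\frac{1}{2\pi}\int_0^{2\pi}\cos(\theta-\alpha)\,e^{ik|\mx|\cos\theta}\,d\theta
=\frac{\cos\alpha}{2\pi}\int_0^{2\pi}\cos\theta\,e^{ik|\mx|\cos\theta}\,d\theta,
\]
where the $\sin\alpha\sin\theta$ piece integrates to zero by oddness. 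It then remains to invoke the integral representation of the Bessel function, namely $J_1(t)=\frac{1}{2\pi i}\int_0^{2\pi}\cos\theta\,e^{it\cos\theta}\,d\theta$ (equivalently, differentiate $J_0(t)=\frac{1}{2\pi}\int_0^{2\pi}e^{it\cos\theta}d\theta$ under the integral sign and use $J_0'=-J_1$), which yields exactly $i\cos\alpha\,J_1(k|\mx|)=i\left(\frac{\mx}{|\mx|}\cdot\vx\right)J_1(k|\mx|)$.

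The main obstacle — really the only subtle point — is the first step: making precise in what sense the equality between the discrete sum and the integral holds, since for finitely many fixed directions $\set{\vt_n}$ these are not literally equal. I would handle this by emphasizing that the identity is asymptotic in $N$ and that the subsequent analysis of the imaging functional $\mathbb{W}(\mz)$ is carried out in this large-$N$ regime, consistent with the existence of $N_0$ in the range characterization quoted from \cite{AK2}. The evaluation of the integral itself is entirely routine once the coordinates are chosen to exploit rotational invariance.
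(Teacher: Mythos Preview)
The paper does not actually prove this lemma: it is introduced with ``we recall a useful relationship'' and stated without a proof environment, serving only as a tool for the proof of Theorem~\ref{TheoremMUSIC}. So there is no paper proof to compare against.

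Your argument is correct and is the standard one. The Riemann-sum step is exactly the informal sense in which the paper uses the identity (compare the paper's own computation $\sum_{n=1}^N(\vt_n\cdot\vx)^2\approx\frac{N}{2\pi}\int_{\mathbb{S}^1}(\vt\cdot\vx)^2\,d\vt$ in (\ref{NormofSingularVector})), and your evaluation of the integral via the alignment $\mx=|\mx|(1,0)$, the vanishing of the $\sin\theta$ piece by symmetry, and the identity $\frac{1}{2\pi}\int_0^{2\pi}\cos\theta\,e^{it\cos\theta}\,d\theta=iJ_1(t)$ (obtained by differentiating $J_0(t)=\frac{1}{2\pi}\int_0^{2\pi}e^{it\cos\theta}\,d\theta$ and using $J_0'=-J_1$) is clean and correct. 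Your caveat about the first equality being asymptotic in $N$ is well placed and matches how the paper treats it throughout.
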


Now, we state the main result.
\begin{thm}\label{TheoremMUSIC}
  Let $N>M$. Then, for sufficiently large $N$ and $k$, (\ref{MUSICfunction}) can be written as follows:
  \begin{equation}\label{StructureMUSIC}
    \mathbb{W}(\mz)=\left(1-2\sum_{m=1}^{M}\left(\frac{\mz-\my_m}{|\mz-\my_m|}\cdot\mn(\my_m)\right)^2J_1(k|\mz-\my_m|)^2\right)^{-1/2}.
\end{equation}
\end{thm}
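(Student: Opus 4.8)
The plan is to compute $|\mP_{\mbox{\tiny noise}}(\mf(\mz))|^2$ explicitly. Since $\mf(\mz)$ is a unit vector by the normalization in (\ref{VecF}), we have
\[
|\mP_{\mbox{\tiny noise}}(\mf(\mz))|^2 = \left|\left(\mathbb{I}-\sum_{m=1}^{M}\mU_m\mU_m^*\right)\mf(\mz)\right|^2 = 1 - \sum_{m=1}^{M}|\mU_m^*\mf(\mz)|^2,
\]
using orthonormality of the $\mU_m$. So the key is to identify the left-singular vectors $\mU_m$ associated with the nonzero singular values. First I would invoke the factorization (\ref{MSRN}) together with the Rayleigh-resolution reduction: the range of $\mathbb{K}$ is spanned by the $M$ illumination vectors $\mathbb{P}_{\mathrm{N}}(\hat{\mx},\my_m)$, $m=1,\dots,M$, whose $n$-th entry is $(\vt_n\cdot\mn(\my_m))e^{ik\vt_n\cdot\my_m}$. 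For sufficiently large $N$ these vectors are (approximately) orthogonal — this follows from Lemma \ref{TheoremBessel} applied with $\mx=\my_p-\my_q$, since $J_1(k|\my_p-\my_q|)\to0$ for $p\neq q$ when $k$ is large and the points are separated by at least $\lambda/2$ — so after normalization $\mU_m \approx \mathbb{P}_{\mathrm{N}}(\hat{\mx},\my_m)/|\mathbb{P}_{\mathrm{N}}(\hat{\mx},\my_m)|$.

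Next I would compute the normalizing constant $|\mathbb{P}_{\mathrm{N}}(\hat{\mx},\my_m)|^2 = \sum_{n=1}^N (\vt_n\cdot\mn(\my_m))^2$. Writing $(\vt_n\cdot\mn)^2 = \tfrac12(1+\cos 2\phi_n)$ relative to the angle $\phi_n$ between $\vt_n$ and $\mn(\my_m)$, and using that the $\vt_n$ span $\mathbb{S}^1$ uniformly for large $N$, the oscillatory term averages to zero and $|\mathbb{P}_{\mathrm{N}}(\hat{\mx},\my_m)|^2 \approx N/2$, i.e. $|\mathbb{P}_{\mathrm{N}}(\hat{\mx},\my_m)| \approx \sqrt{N/2}$. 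Then I would compute the inner product
\[
\mU_m^*\mf(\mz) \approx \frac{1}{\sqrt{N/2}}\cdot\frac{1}{\sqrt{N}}\sum_{n=1}^{N}(\vt_n\cdot\mn(\my_m))\,\overline{e^{ik\vt_n\cdot\my_m}}\,e^{ik\vt_n\cdot\mz} = \frac{\sqrt{2}}{N}\sum_{n=1}^{N}(\vt_n\cdot\mn(\my_m))e^{ik\vt_n\cdot(\mz-\my_m)}.
\]
Applying Lemma \ref{TheoremBessel} with $\vx = \mn(\my_m)\in\mathbb{S}^1$ and $\mx = \mz-\my_m$ gives
\[
\mU_m^*\mf(\mz) \approx \sqrt{2}\,i\left(\frac{\mz-\my_m}{|\mz-\my_m|}\cdot\mn(\my_m)\right)J_1(k|\mz-\my_m|),
\]
so $|\mU_m^*\mf(\mz)|^2 \approx 2\left(\frac{\mz-\my_m}{|\mz-\my_m|}\cdot\mn(\my_m)\right)^2 J_1(k|\mz-\my_m|)^2$. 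Substituting into the expression for $|\mP_{\mbox{\tiny noise}}(\mf(\mz))|^2$ and taking the reciprocal square root yields (\ref{StructureMUSIC}).

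The main obstacle is making the approximate orthogonality of the $\mU_m$ — and more precisely the identification of each normalized $\mU_m$ with the corresponding normalized illumination vector — rigorous rather than heuristic. In the regime $N,k$ large with the $\my_m$ separated by the Rayleigh limit, the Gram matrix of the $\mathbb{P}_{\mathrm{N}}(\hat{\mx},\my_m)$ is a perturbation of a diagonal matrix, but quantifying the error and showing it does not spoil the leading-order Bessel structure requires care; this is precisely where the hypotheses ``sufficiently large $N$ and $k$'' and the half-wavelength segmentation are used, and I would lean on the range characterization attributed to \cite{AK2} and the subspace argument of \cite{HSZ1} to close this gap, treating the resulting identity (\ref{StructureMUSIC}) as asymptotic in $N$ and $k$.
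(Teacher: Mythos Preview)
Your argument is correct and reaches the same conclusion as the paper, but by a shorter route. The paper does not invoke the projection identity $|\mP_{\mbox{\tiny noise}}\mf|^2 = 1 - \sum_m |\mU_m^*\mf|^2$; instead it writes out the vector $\mP_{\mbox{\tiny noise}}(\mf(\mz))$ entry by entry (applying Lemma~\ref{TheoremBessel} inside each component), then expands the squared norm as $\frac{1}{N}\sum_{n}(1+\Psi_1-\overline{\Psi}_1+\Psi_2\overline{\Psi}_2)$ with explicit cross terms $\Psi_1$ and diagonal terms $\Psi_2$, and evaluates each of these sums separately --- using Lemma~\ref{TheoremBessel} again for the cross terms and the identity (\ref{NormofSingularVector}) for the quadratic term. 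Your use of the orthonormality of the left-singular vectors collapses all of this bookkeeping into a single inner-product computation $\mU_m^*\mf(\mz)$, and you are also more explicit than the paper about \emph{why} the normalized illumination vectors may be taken as the $\mU_m$ (the paper simply cites \cite{AGKPS} for the form of $\mU_m$ and silently drops the $m\neq m'$ terms when passing from the double to the single sum in $\sum_n\Psi_2\overline{\Psi}_2$). Both arguments rest on the same two ingredients --- the $N/2$ normalization and Lemma~\ref{TheoremBessel} --- so the difference is organizational rather than mathematical; your version is cleaner, while the paper's makes the componentwise structure of the projected vector visible before taking the norm.
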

\begin{proof}
Let us consider the polar-coordinate representation: for $\vt,\vx\in\mathbb{S}^1$, $\vt=[\cos\theta,\sin\theta]^T$, and $\vx=[\cos\psi,\sin\psi]^T$. Since
\begin{equation}\label{NormofSingularVector}
  \sum_{n=1}^{N}(\vt_n\cdot\vx)^2\approx\frac{N}{2\pi}\int_{\mathbb{S}^1}(\vt\cdot\vx)^2d\vt=\frac{N}{2\pi}\int_0^{2\pi}\cos^2(\phi-\psi)d\phi=\frac{N}{2},
\end{equation}
the left singular vectors are of the form (see \cite{AGKPS})
\[\mU_m\approx\sqrt{\frac{2}{N}}\bigg[(\vt_1\cdot\mn(\my_m))e^{ik\vt_1\cdot\my_m},
(\vt_2\cdot\mn(\my_m))e^{ik\vt_2\cdot\my_m},\cdots,(\vt_N\cdot\mn(\my_m))e^{ik\vt_N\cdot\my_m}\bigg]^T.\]

Since, we select $\mf(\mz)$ as (\ref{VecF}), $\mP_{\mbox{\tiny noise}}$ can be written as
\begin{align*}
  &\mP_{\mbox{\tiny noise}}(\mf(\mz)) =\left(\mathbb{I}_{N}-\sum_{m=1}^{M}\mU_m\overline{\mU}_m^T\right)\mf(\mz)\\
  &\approx\frac{1}{\sqrt{N}}\left[\begin{array}{c}
        e^{ik\vt_1\cdot\mz} \\
        e^{ik\vt_2\cdot\mz} \\
        \vdots \\
        e^{ik\vt_N\cdot\mz} \\
      \end{array}\right]-
      \frac{2}{N\sqrt{N}}\sum_{m=1}^{M}\left[\begin{array}{c}
      \displaystyle(\vt_1\cdot\mn(\my_m))e^{ik\vt_1\cdot\my_m}\sum_{n=1}^{N}(\vt_n\cdot\mn(\my_m))e^{ik\vt_n\cdot(\mz-\my_m)}\\
      \displaystyle(\vt_2\cdot\mn(\my_m))e^{ik\vt_2\cdot\my_m}\sum_{n=1}^{N}(\vt_n\cdot\mn(\my_m))e^{ik\vt_n\cdot(\mz-\my_m)}\\
      \vdots\\
      \displaystyle(\vt_N\cdot\mn(\my_m))e^{ik\vt_N\cdot\my_m}\sum_{n=1}^{N}(\vt_n\cdot\mn(\my_m))e^{ik\vt_n\cdot(\mz-\my_m)}
      \end{array}\right]\\
      &=\frac{1}{\sqrt{N}}\left[\begin{array}{c}
      \displaystyle e^{ik\vt_1\cdot\mz}-2i\sum_{m=1}^{M}(\vt_1\cdot\mn(\my_m))\left(\frac{\mz-\my_m}{|\mz-\my_m|}\cdot\mn(\my_m)\right)e^{ik\vt_1\cdot \my_m}J_1(k|\mz-\my_m|)\\
      \displaystyle e^{ik\vt_2\cdot\mz}-2i\sum_{m=1}^{M}(\vt_2\cdot\mn(\my_m))\left(\frac{\mz-\my_m}{|\mz-\my_m|}\cdot\mn(\my_m)\right)e^{ik\vt_2\cdot \my_m}J_1(k|\mz-\my_m|)\\
      \vdots\\
      \displaystyle e^{ik\vt_N\cdot\mz}-2i\sum_{m=1}^{M}(\vt_N\cdot\mn(\my_m))\left(\frac{\mz-\my_m}{|\mz-\my_m|}\cdot\mn(\my_m)\right)e^{ik\vt_N\cdot \my_m}J_1(k|\mz-\my_m|)
      \end{array}\right].
\end{align*}
Hence, we can obtain
\[|\mP_{\mbox{\tiny noise}}(\mf(\mz))|=\left(\frac{1}{N}\sum_{n=1}^{N}\bigg(1+\Psi_1-\overline{\Psi}_1+\Psi_2\overline{\Psi}_2\bigg)\right)^{1/2},\]
where
\begin{align*}
  \Psi_1&=2i\sum_{m=1}^{M}(\vt_n\cdot\mn(\my_m))\left(\frac{\mz-\my_m}{|\mz-\my_m|}\cdot\mn(\my_m)\right) e^{ik\vt_n\cdot(\mz-\my_m)}J_1(k|\mz-\my_m|)\\
  \Psi_2&=2\sum_{m=1}^{M}(\vt_n\cdot\mn(\my_m))\left(\frac{\mz-\my_m}{|\mz-\my_m|}\cdot\mn(\my_m)\right)e^{ik\vt_n\cdot \my_m}J_1(k|\mz-\my_m|).
\end{align*}

Since
\begin{align*}
  \sum_{n=1}^{N}\Psi_1&=2i\sum_{n=1}^{N}\sum_{m=1}^{M}(\vt_n\cdot\mn(\my_m))\left(\frac{\mz-\my_m}{|\mz-\my_m|}\cdot\mn(\my_m)\right) e^{ik\vt_n\cdot(\mz-\my_m)}J_1(k|\mz-\my_m|)\\
  &=2i\sum_{m=1}^{M}\left(\frac{\mz-\my_m}{|\mz-\my_m|}\cdot\mn(\my_m)\right)\sum_{n=1}^{N}\bigg((\vt_n\cdot\mn(\my_m))e^{ik\vt_n\cdot(\mz-\my_m)}\bigg)J_1(k|\mz-\my_m|)\\
  &=-2N\sum_{m=1}^{M}\bigg(\frac{\mz-\my_m}{|\mz-\my_m|}\cdot\mn(\my_m)\bigg)^2J_1(k|\mz-\my_m|)^2,
\end{align*}
we can obtain
\begin{equation}\label{term3}
    \frac{1}{N}\sum_{n=1}^{N}(\Psi_1-\overline{\Psi}_1)=-4\sum_{m=1}^{M}\bigg(\frac{\mz-\my_m}{|\mz-\my_m|}\cdot\mn(\my_m)\bigg)^2J_1(k|\mz-\my_m|)^2.
\end{equation}

Furthermore, we can evaluate
\begin{align*}
  \sum_{n=1}^{N}\Psi_2\overline{\Psi}_2=&4\sum_{n=1}^{N}\left(\sum_{m=1}^{M}(\vt_n\cdot\mn(\my_m))\left(\frac{\mz-\my_m}{|\mz-\my_m|}\cdot\mn(\my_m)\right)e^{ik\vt_n\cdot \my_m}J_1(k|\mz-\my_m|)\right)\\
  &\times\left(\sum_{m'=1}^{M}(\vt_n\cdot\mn(\my_{m'}))\left(\frac{\mz-\my_{m'}}{|\mz-\my_{m'}|}\cdot\mn(\my_{m'})\right)e^{-ik\vt_n\cdot\my_{m'}}J_1(k|\mz-\my_{m'}|)\right)\\
  =&4\sum_{m=1}^{M}\sum_{n=1}^{N}\left((\vt_n\cdot\mn(\my_m))^2\left(\frac{\mz-\my_m}{|\mz-\my_m|}\cdot\mn(\my_m)\right)^2J_1(k|\mz-\my_m|)^2\right)\\
  =&4N\sum_{m=1}^{M}\left(\frac{1}{N}\sum_{n=1}^{N}(\vt_n\cdot\mn(\my_m))^2\right)\left(\frac{\mz-\my_m}{|\mz-\my_m|}\cdot\mn(\mx)\right)^2J_1(k|\mz-\my_m|)^2\\
  =&2N\sum_{m=1}^{M}\bigg(\frac{\mz-\my_m}{|\mz-\my_m|}\cdot\mn(\my_m)\bigg)^2J_1(k|\mz-\my_m|)^2.
\end{align*}

Hence, we can conclude that
\begin{equation}\label{term4}
  \frac{1}{N}\sum_{n=1}^{N}\Psi_2\overline{\Psi}_2=2\sum_{m=1}^{M}\bigg(\frac{\mz-\my_m}{|\mz-\my_m|}\cdot\mn(\my_m)\bigg)^2J_1(k|\mz-\my_m|)^2.
\end{equation}

Therefore, by (\ref{term3}) and (\ref{term4}), we can obtain
\[|\mP_{\mbox{\tiny noise}}(\mf(\mz))|=\left\{1-2\sum_{m=1}^{M}\left(\frac{\mz-\my_m}{|\mz-\my_m|}\cdot\mn(\my_m)\right)^2J_1(k|\mz-\my_m|)^2\right\}^{1/2}.\]
With this, we can derive (\ref{StructureMUSIC}). This completes the proof.
\end{proof}

\subsection{Intrinsic properties of imaging functional}
Based on the structure (\ref{StructureMUSIC}), we can observe following some intrinsic properties of MUSIC-type imaging functional.

\begin{enumerate}\renewcommand{\theenumi}{(P\arabic{enumi})}
\item\label{P1} Total number of incident and observation directions $N$ must be sufficiently large. Furthermore, based on the relationship between imaging functional and Bessel function, imaging result is highly depending on the applied wavenumber. So, $k$ must be large enough. These assumptions in Theorem \ref{TheoremMUSIC} are very strong \textit{a prior} conditions, refer to the simulation results in Section \ref{sec:4}.
\item\label{P2} Since $J_1(1.8412)'\approx0$, $J_1$ has its maximum value $0.5819$ at $x\approx1.8412$. Therefore,
    \[2\left(\frac{\mz-\my_m}{|\mz-\my_m|}\cdot\mn(\my_m)\right)^2J_1(k|\mz-\my_m|)^2\leq2(0.5819)^2=0.6772\ne1.\]
    This means that there is no blow-up of $\mathbb{W}(\mz)$.
\item\label{P3} On the basis of the shape of $J_1(x)^2$ (see Figure \ref{FigureBessel}), instead of true shape, two curves will appear along the normal direction in the neighborhood of $\Gamma$. Related discussion can be found in \cite{HSZ1,P-MUSIC1,P-SUB3} and numerical results in Section \ref{sec:4}. Note that following asymptotic form holds
\[J_1(k|\mz-\my_m|)^2\approx\left(\frac{k|\mz-\my_m|}{2}\right)^2\quad\mbox{for}\quad0<k|\mz-\my_m|\ll\sqrt{2},\]
it is expected that true shape of $\Gamma$ can be imaged via the map of $\mathbb{W}(\mz)$ when $k\longrightarrow+\infty$. However, this is an ideal assumption.
\item Following recent work \cite{CZ}, it is expected that vectors $\mc_n$, $n=1,2,\cdots,N$, in (\ref{TestVectorN1}) can be estimated. Then, the structure of $\mathbb{W}(\mz)$ becomes (see \cite{P-MUSIC1} for instance)
\[\mathbb{W}(\mz)\approx\left(1-\sum_{m=1}^{M}J_0(k|\mz-\my_m|)^2\right)^{-1/2}\]
correspondingly, almost true shape of $\Gamma$ can be obtained. Related results are exhibited in \cite{PL1}.
\item Based on recent works \cite{P-SUB3,AGKPS,P-SUB1}, applying multi-frequency improves the imaging performance. However, in the imaging of sound-hard arc, application of multi-frequency will reduce the artifacts but true shape of crack cannot be retrieved.
\end{enumerate}

\begin{figure}
\begin{center}
\includegraphics[width=0.9\textwidth]{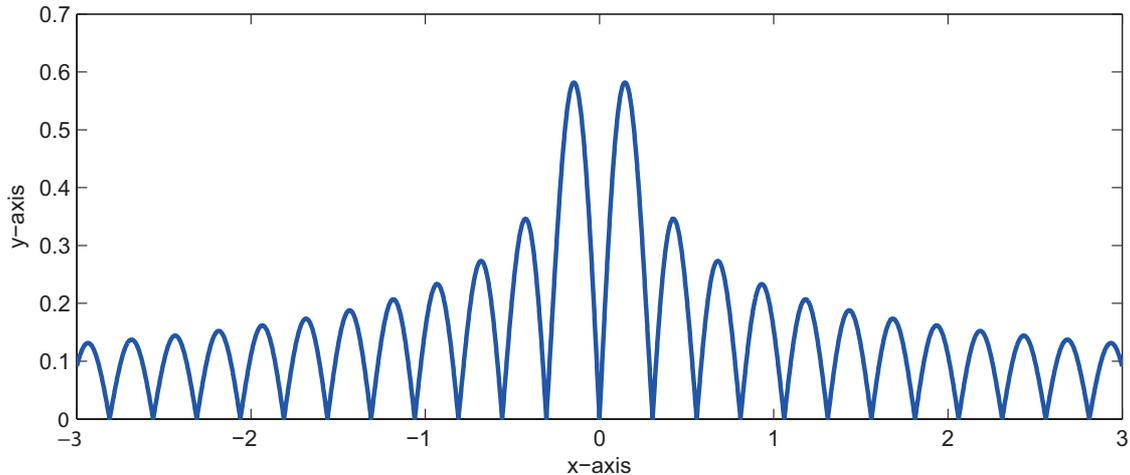}
\caption{$1-$D plot of $|J_1(kx)|$ for $k=2\pi/0.5$.}\label{FigureBessel}
\end{center}
\end{figure}

\section{Simulation results and discussions}\label{sec:4}
In this section, some results of numerical simulations with noisy data are exhibited for supporting our result in Theorem \ref{TheoremMUSIC}. Throughout this section, two curves $\Gamma_j$ are chosen to describe the sound-hard arc such that
\begin{align*}
  \Gamma_1&=\set{[s-0.2,-0.5s^2+0.4]:-0.5\leq s\leq0.5},\\
  \Gamma_2&=\set{[s+0.2,s^3+s^2-0.4]:-0.5\leq s\leq0.5}.
\end{align*}
Total number of incident and observation directions is set to $N=32$ and incident vectors $\vt_l$ are selected as
\[\vt_l=-\left[\cos\frac{2\pi(l-1)}{N-1},\sin\frac{2\pi(l-1)}{N-1}\right]^T.\]

It is worth mentioning that, since the reliable and efficient solution of the direct scattering problem indicated previously is very important, elements $u_{\infty}(\theta_j,\theta_l)$ for $j,l=1,2,\cdots,N$ of the $\mathbb{K}$ are generated by solving second-kind Fredholm integral equation along the crack, refer to \cite[Section 3]{N}. After obtaining the dataset, $20-$dB white Gaussian random noise is added to the unperturbed data via the MATLAB subroutine \texttt{awgn}. In order to perform the singular value decomposition of $\mathbb{K}$, MATLAB subroutine \texttt{svd} is applied. To obtain the number of nonzero singular values, a $0.01$-threshold scheme (select the first $M$-values $\sigma_m$ that $\sigma_m/\sigma_1\geq0.01$) is adopted. A more detailed discussion of threshold can be found in \cite{PL1,PL3} (see \cite{HSZ1} for the volumetric-extended target case).

First, let us consider the imaging of $\Gamma_1$. Figure \ref{Result1} exhibits the maps of $\mathbb{W}(\mz)$ for $\lambda=\pi$, $0.8$, $0.4$, and $0.2$. It is easy to observe that two ghost replicas with large magnitude in the neighborhood of $\Gamma_1$ so that observation \ref{P3} holds. The shape of $\Gamma_1$ cannot be identified via the map of $\mathbb{W}(\mz)$ if applied value of $k$ is small. In contrast, if $k$ is sufficiently large, one can identify two curves. This supports the observation \ref{P1}. Furthermore, if $k$ becomes large, identified shape is close to the true shape of $\Gamma_1$. Hence, although it is an ideal assumption, if $k=+\infty$, true shape of $\Gamma_1$ can be recognized via the map of $\mathbb{W}(\mz)$, refer to \ref{P3}. Notice that for selected values of $k$, maximum value of $\mathbb{W}(\mz)$ is not so large enough. Hence, there is no blow-up of $\mathbb{W}(\mz)$ and this supports observation \ref{P2}.

\begin{figure}
\begin{center}
\includegraphics[width=0.49\textwidth]{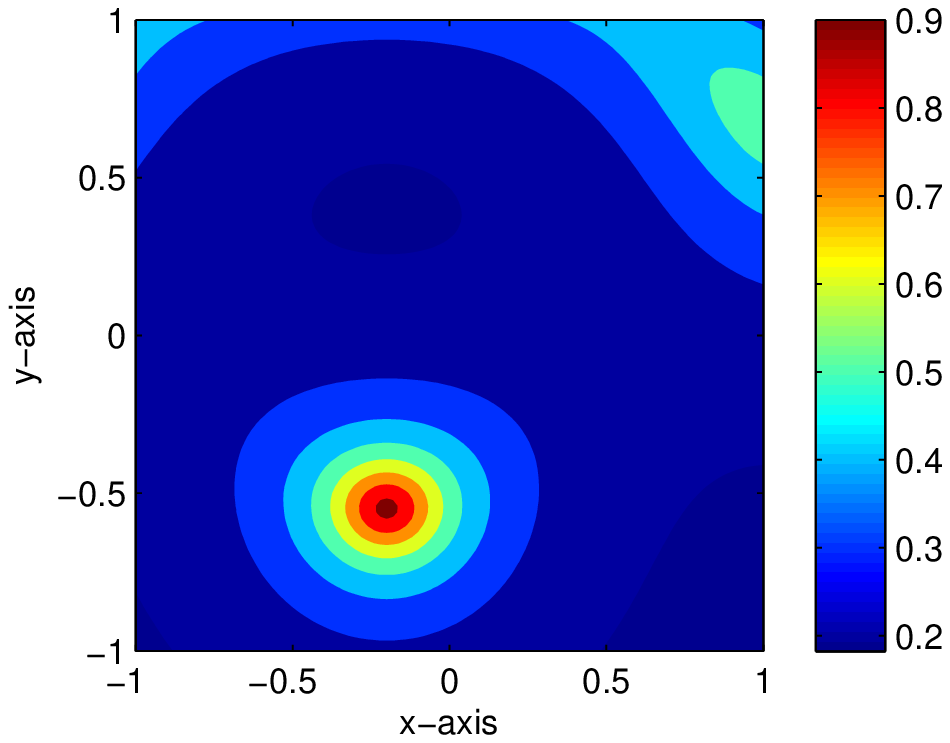}
\includegraphics[width=0.49\textwidth]{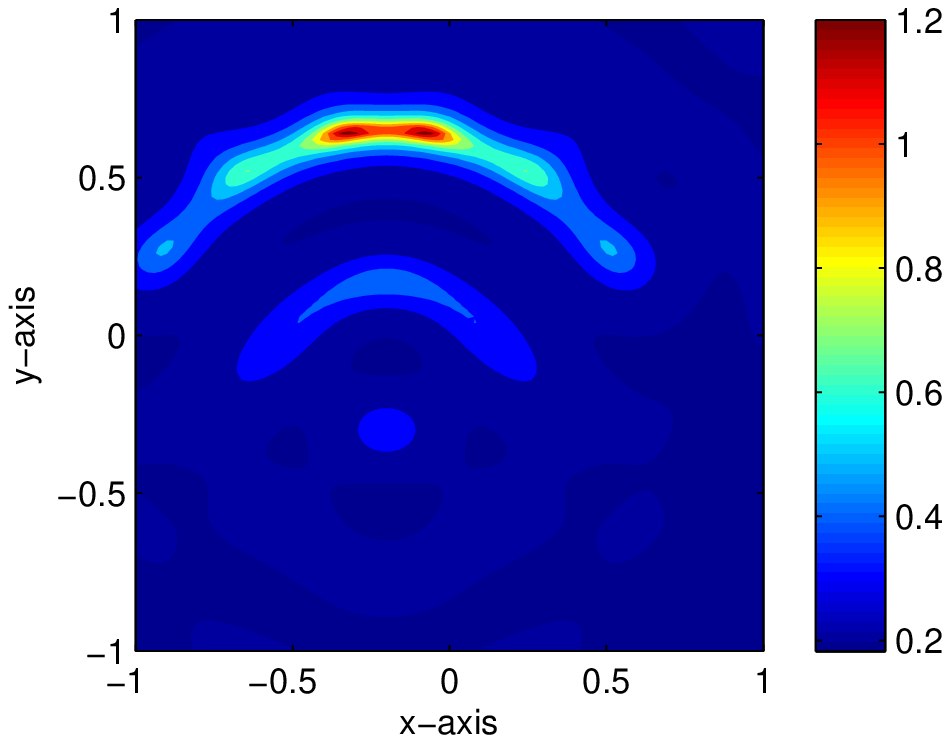}\\
\includegraphics[width=0.49\textwidth]{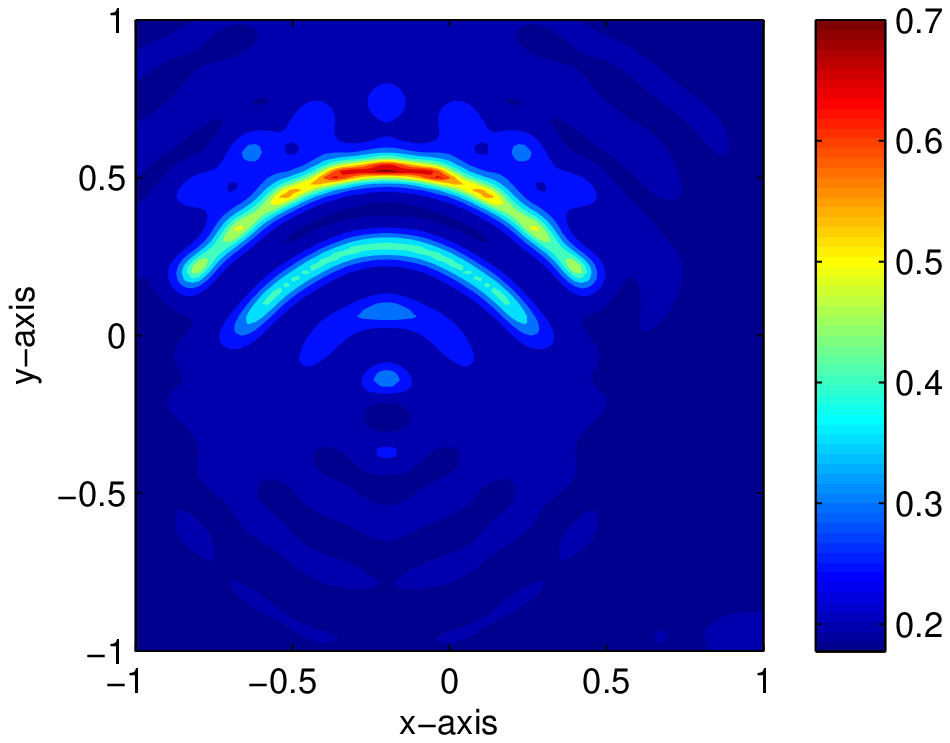}
\includegraphics[width=0.49\textwidth]{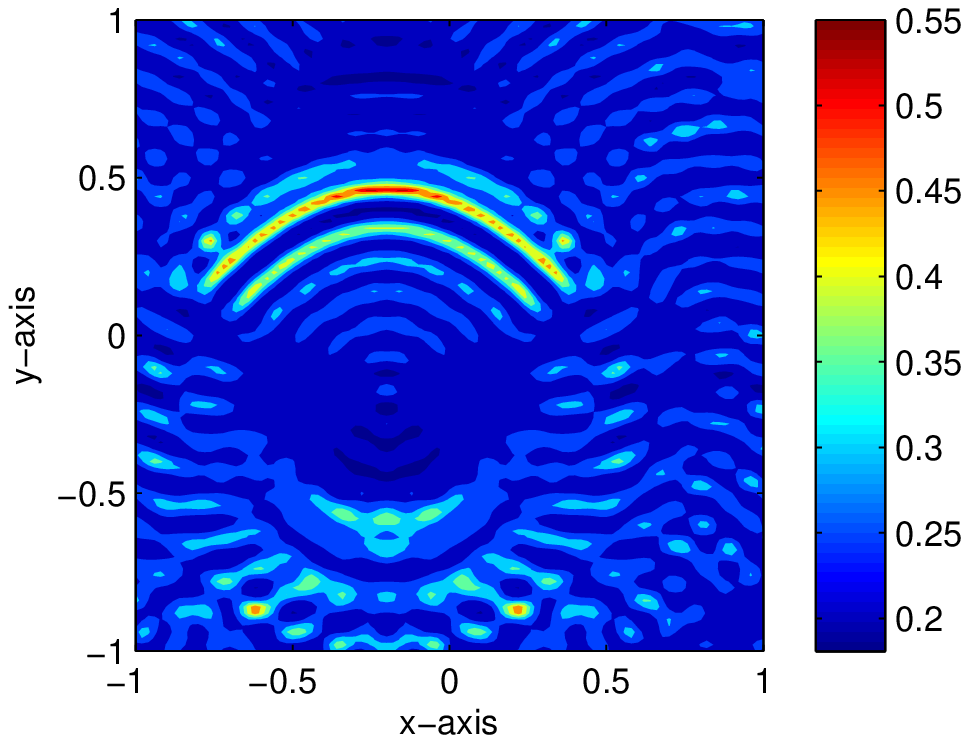}
\caption{Maps of $\mathbb{W}(\mz)$ for $\lambda=\pi$ (top, left), $\lambda=0.8$ (top, right), $\lambda=0.4$ (bottom, left), and $\lambda=0.2$ (bottom, right) when the crack is $\Gamma_1$.}\label{Result1}
\end{center}
\end{figure}

Figure \ref{Result2} shows the maps of $\mathbb{W}(\mz)$ for $\lambda=\pi$, $0.8$, $0.4$, and $0.2$ when the crack is $\Gamma_2$. Similar to the previous result, this result supports the analysis derived in Theorem \ref{TheoremMUSIC}.

\begin{figure}
\begin{center}
\includegraphics[width=0.49\textwidth]{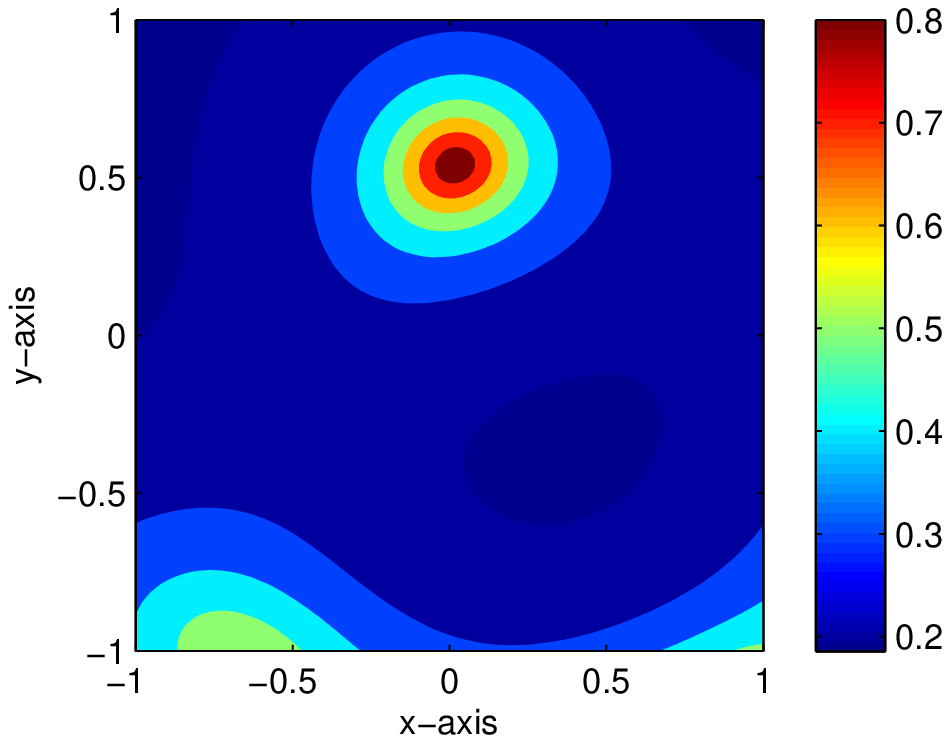}
\includegraphics[width=0.49\textwidth]{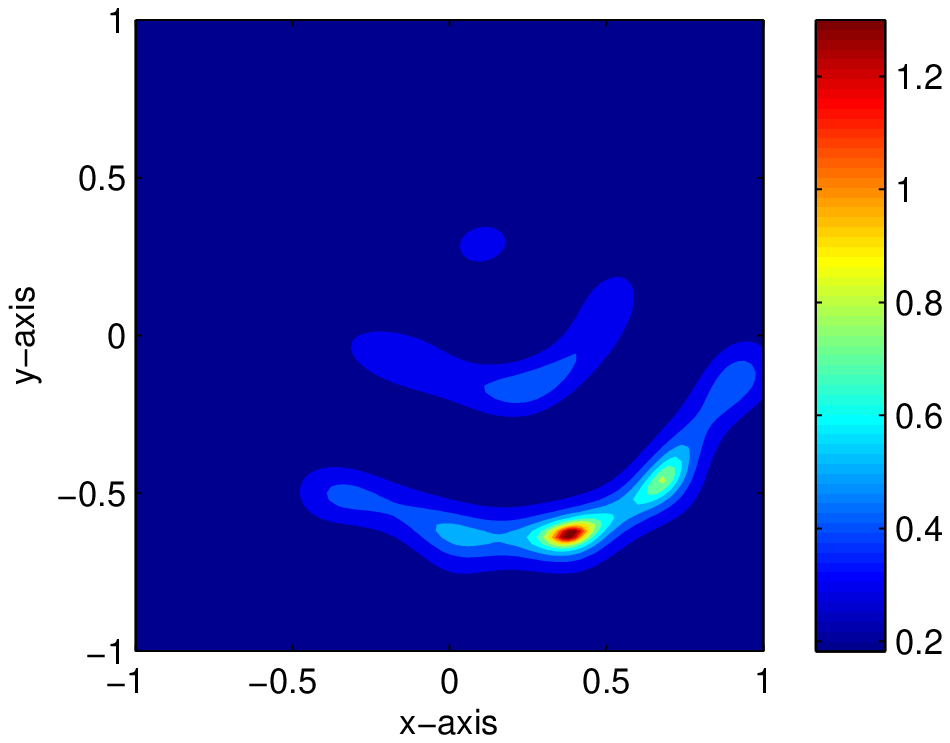}\\
\includegraphics[width=0.49\textwidth]{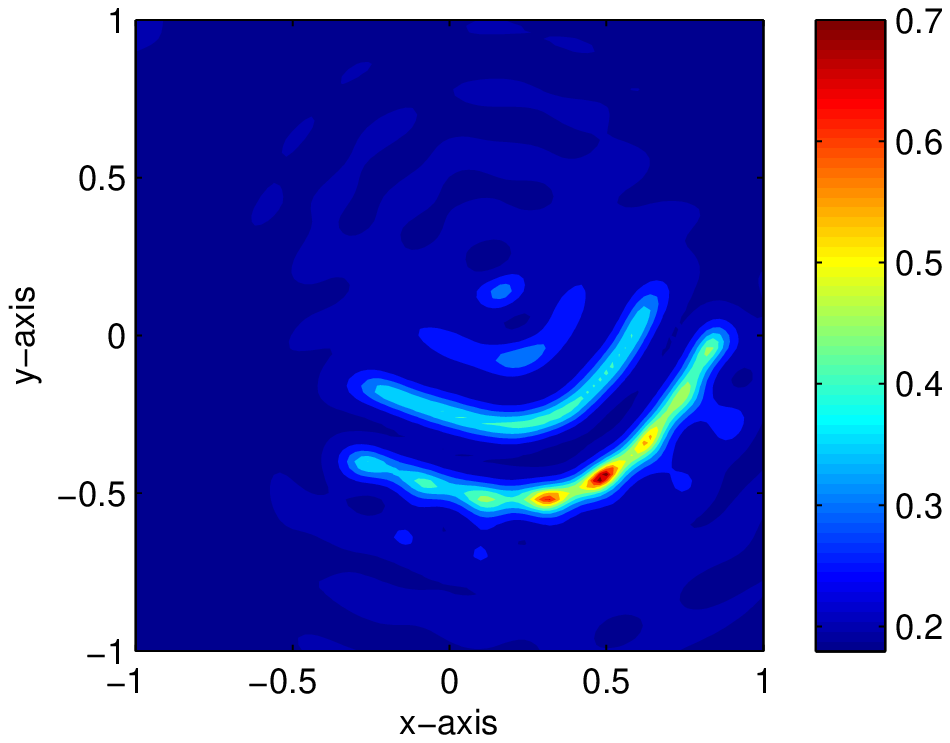}
\includegraphics[width=0.49\textwidth]{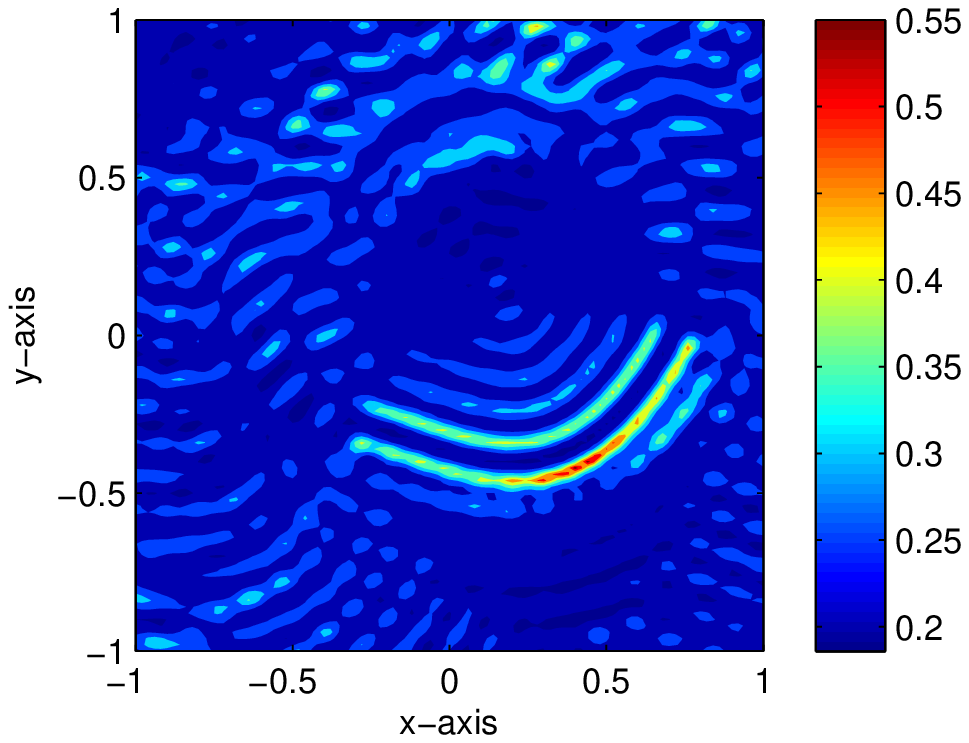}
\caption{Same as Figure \ref{Result1} except the crack is $\Gamma_2$.}\label{Result2}
\end{center}
\end{figure}

It is well-known that the mathematical setting and the numerical analysis could be extended straightforwardly to multiple cracks. For the final example, we consider the imaging of two cracks $\Gamma_1\cup\Gamma_2$ with $N=48$ total directions, refer to Figure \ref{ResultM}. Similar to the previous results, we can recognize two curves along $\Gamma_1$ and $\Gamma_2$ when sufficiently large $k=2\pi/0.2$ is applied. But, it is very hard to identify the existence of cracks with small $k$.

\begin{figure}
\begin{center}
\includegraphics[width=0.49\textwidth]{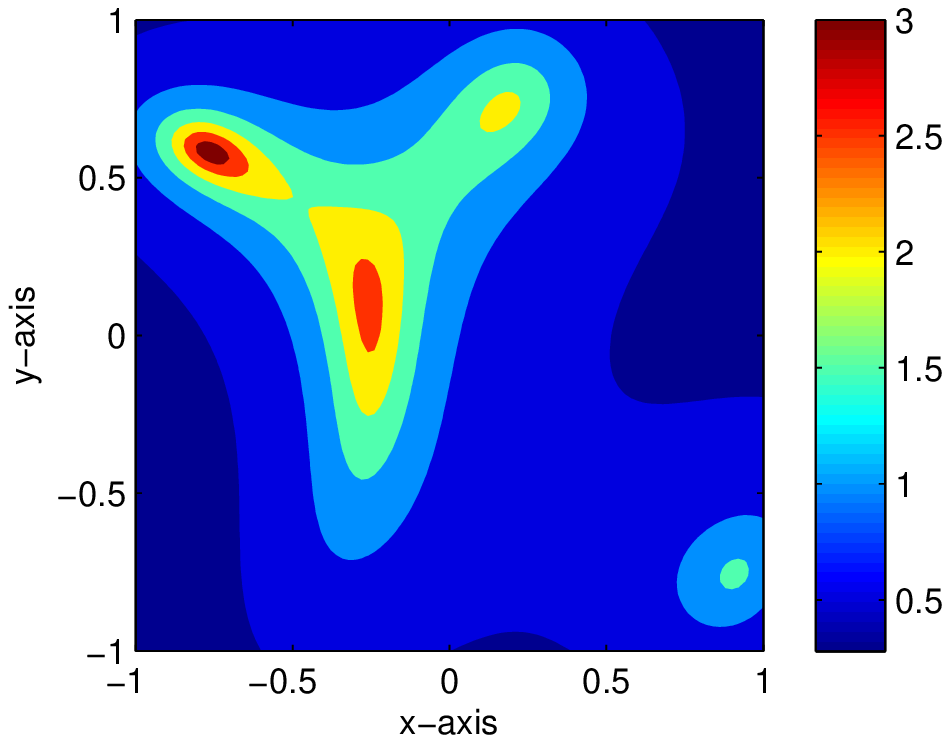}
\includegraphics[width=0.49\textwidth]{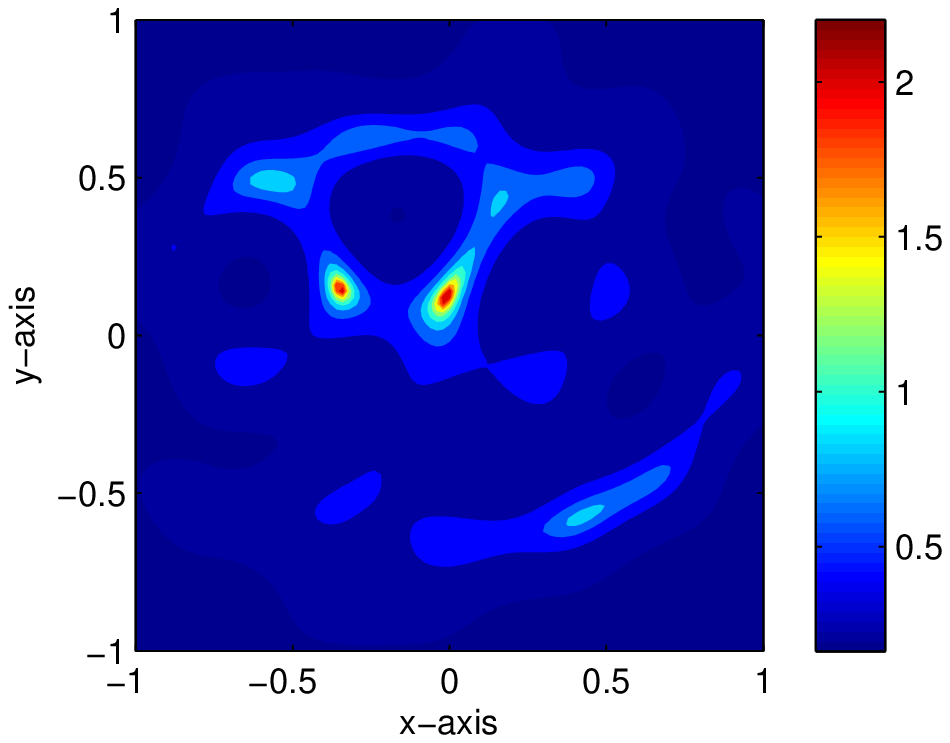}\\
\includegraphics[width=0.49\textwidth]{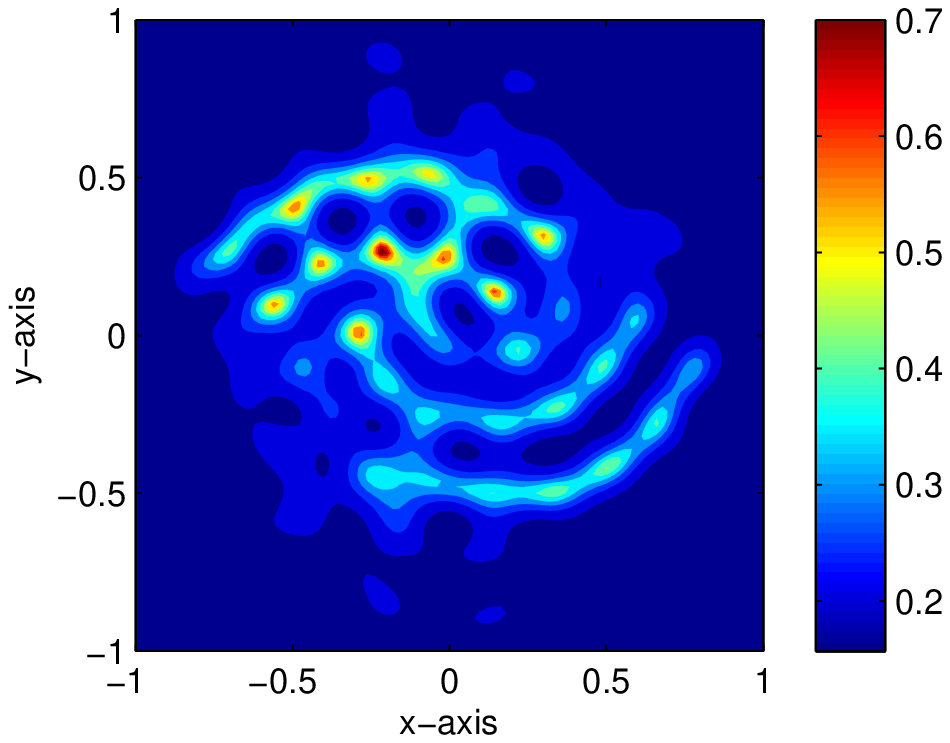}
\includegraphics[width=0.49\textwidth]{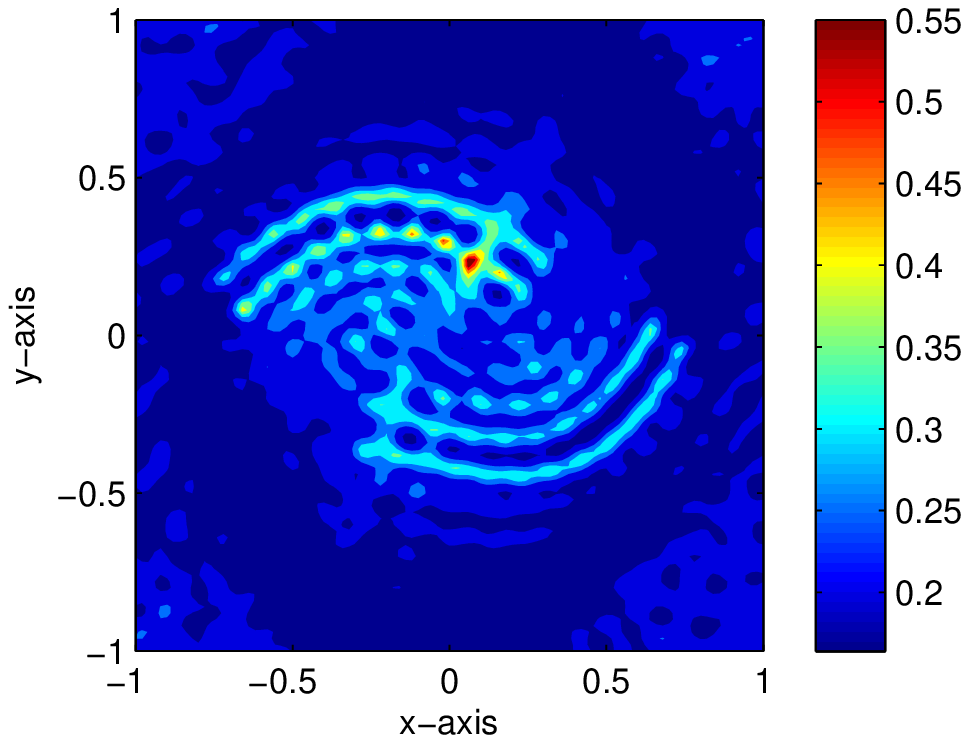}
\caption{Same as Figure \ref{Result1} except the crack is $\Gamma_2$.}\label{ResultM}
\end{center}
\end{figure}

\section{Conclusion}\label{sec:5}
We considered MUSIC-type algorithm for imaging of sound-hard arc. Based on the relationship between MUSIC-type imaging function and Bessel function of order $1$ of the first kind, we examined intrinsic properties and limitation of MUSIC.

Although we considered MUSIC-type imaging in full-view inverse scattering problem, based on our contributions \cite{AJP,JKP}, MUSIC is also applicable to limited-view inverse problem with appropriate condition of the range of incident and observation direction. Discovering certain properties of MUSIC in limited-view problem will be an interesting research topic.

\bibliographystyle{elsarticle-num-names}
\bibliography{../../../References}

\begin{thebibliography}{24}
\providecommand{\natexlab}[1]{#1}
\providecommand{\url}[1]{\texttt{#1}}
\providecommand{\urlprefix}{URL }
\expandafter\ifx\csname urlstyle\endcsname\relax
  \providecommand{\doi}[1]{doi:\discretionary{}{}{}#1}\else
  \providecommand{\doi}[1]{doi:\discretionary{}{}{}\begingroup
  \urlstyle{rm}\url{#1}\endgroup}\fi
\providecommand{\bibinfo}[2]{#2}

\bibitem[{M{\"o}nch(1996)}]{M2}
\bibinfo{author}{L.~M{\"o}nch}, \bibinfo{title}{On the numerical solution of
  the direct scattering problem for an open sound-hard arc},
  \bibinfo{journal}{J. Comput. Appl. Math.} \bibinfo{volume}{17}
  (\bibinfo{year}{1996}) \bibinfo{pages}{343--356}.

\bibitem[{M{\"o}nch(1997)}]{M1}
\bibinfo{author}{L.~M{\"o}nch}, \bibinfo{title}{On the inverse acoustic
  scattering problem by an open arc: the sound-hard case},
  \bibinfo{journal}{Inverse Problems} \bibinfo{volume}{13}
  (\bibinfo{year}{1997}) \bibinfo{pages}{1379--1392}.

\bibitem[{Alves and Serranho(2004)}]{AS1}
\bibinfo{author}{C.~J.~S. Alves}, \bibinfo{author}{P.~Serranho},
  \bibinfo{title}{On the identification of the flatness of a sound-hard
  acoustic crack}, \bibinfo{journal}{Math. Comput. Simulat.}
  \bibinfo{volume}{66} (\bibinfo{year}{2004}) \bibinfo{pages}{337--353}.

\bibitem[{Kress and Serranho(2007)}]{KS1}
\bibinfo{author}{R.~Kress}, \bibinfo{author}{P.~Serranho}, \bibinfo{title}{A
  hybrid method for sound-hard obstacle reconstruction}, \bibinfo{journal}{J.
  Comput. Appl. Math.} \bibinfo{volume}{204} (\bibinfo{year}{2007})
  \bibinfo{pages}{418--427}.

\bibitem[{Lee(2012)}]{L2}
\bibinfo{author}{K.-M. Lee}, \bibinfo{title}{Inverse scattering from a
  sound-hard crack via two-step method}, \bibinfo{journal}{Abstr. Appl. Anal.}
  \bibinfo{volume}{2012} (\bibinfo{year}{2012}) \bibinfo{pages}{810676}.

\bibitem[{Lee(2006)}]{L1}
\bibinfo{author}{K.-M. Lee}, \bibinfo{title}{Inverse scattering via nonlinear
  integral equations for a Neumann crack}, \bibinfo{journal}{Inverse Problems}
  \bibinfo{volume}{22} (\bibinfo{year}{2006}) \bibinfo{pages}{1989--2000}.

\bibitem[{Ahn et~al.(2015)Ahn, Jeon, and Park}]{AJP}
\bibinfo{author}{C.~Y. Ahn}, \bibinfo{author}{K.~Jeon}, \bibinfo{author}{W.-K.
  Park}, \bibinfo{title}{Analysis of {MUSIC}-type imaging functional for
  single, thin electromagnetic inhomogeneity in limited-view inverse scattering
  problem}, \bibinfo{journal}{J. Comput. Phys.} \bibinfo{volume}{291}
  (\bibinfo{year}{2015}) \bibinfo{pages}{198--217}.

\bibitem[{Ammari et~al.(2011)Ammari, Garnier, Kang, Park, and S{\o}lna}]{AGKPS}
\bibinfo{author}{H.~Ammari}, \bibinfo{author}{J.~Garnier},
  \bibinfo{author}{H.~Kang}, \bibinfo{author}{W.-K. Park},
  \bibinfo{author}{K.~S{\o}lna}, \bibinfo{title}{Imaging schemes for perfectly
  conducting cracks}, \bibinfo{journal}{SIAM J. Appl. Math.}
  \bibinfo{volume}{71} (\bibinfo{year}{2011}) \bibinfo{pages}{68--91}.

\bibitem[{Ammari et~al.(2005)Ammari, Iakovleva, and Lesselier}]{AIL1}
\bibinfo{author}{H.~Ammari}, \bibinfo{author}{E.~Iakovleva},
  \bibinfo{author}{D.~Lesselier}, \bibinfo{title}{A {MUSIC} algorithm for
  locating small inclusions buried in a half-space from the scattering
  amplitude at a fixed frequency}, \bibinfo{journal}{Multiscale Model. Simul.}
  \bibinfo{volume}{3} (\bibinfo{year}{2005}) \bibinfo{pages}{597--628}.

\bibitem[{Ammari et~al.(2010)Ammari, Kang, Lee, and Park}]{AKLP}
\bibinfo{author}{H.~Ammari}, \bibinfo{author}{H.~Kang},
  \bibinfo{author}{H.~Lee}, \bibinfo{author}{W.-K. Park},
  \bibinfo{title}{Asymptotic imaging of perfectly conducting cracks},
  \bibinfo{journal}{SIAM J. Sci. Comput.} \bibinfo{volume}{32}
  (\bibinfo{year}{2010}) \bibinfo{pages}{894--922}.

\bibitem[{Chen and Zhong(2009)}]{CZ}
\bibinfo{author}{X.~Chen}, \bibinfo{author}{Y.~Zhong}, \bibinfo{title}{{MUSIC}
  electromagnetic imaging with enhanced resolution for small inclusions},
  \bibinfo{journal}{Inverse Problems} \bibinfo{volume}{25}
  (\bibinfo{year}{2009}) \bibinfo{pages}{015008}.

\bibitem[{Hou et~al.(2006)Hou, S{\o}lna, and Zhao}]{HSZ1}
\bibinfo{author}{S.~Hou}, \bibinfo{author}{K.~S{\o}lna},
  \bibinfo{author}{H.~Zhao}, \bibinfo{title}{A direct imaging algorithm for
  extended targets}, \bibinfo{journal}{Inverse Problems} \bibinfo{volume}{22}
  (\bibinfo{year}{2006}) \bibinfo{pages}{1151--1178}.

\bibitem[{Park(2015{\natexlab{a}})}]{P-MUSIC1}
\bibinfo{author}{W.-K. Park}, \bibinfo{title}{Asymptotic properties of
  {MUSIC}-type imaging in two-dimensional inverse scattering from thin
  electromagnetic inclusions}, \bibinfo{journal}{SIAM J. Appl. Math.}
  \bibinfo{volume}{75} (\bibinfo{year}{2015}{\natexlab{a}})
  \bibinfo{pages}{209--228}.

\bibitem[{Park and Lesselier(2009{\natexlab{a}})}]{PL1}
\bibinfo{author}{W.-K. Park}, \bibinfo{author}{D.~Lesselier},
  \bibinfo{title}{Electromagnetic {MUSIC}-type imaging of perfectly conducting,
  arc-like cracks at single frequency}, \bibinfo{journal}{J. Comput. Phys.}
  \bibinfo{volume}{228} (\bibinfo{year}{2009}{\natexlab{a}})
  \bibinfo{pages}{8093--8111}.

\bibitem[{Park and Lesselier(2009{\natexlab{b}})}]{PL3}
\bibinfo{author}{W.-K. Park}, \bibinfo{author}{D.~Lesselier},
  \bibinfo{title}{{MUSIC}-type imaging of a thin penetrable inclusion from its
  far-field multi-static response matrix}, \bibinfo{journal}{Inverse Problems}
  \bibinfo{volume}{25} (\bibinfo{year}{2009}{\natexlab{b}})
  \bibinfo{pages}{075002}.

\bibitem[{Scholz(2002)}]{S2}
\bibinfo{author}{B.~Scholz}, \bibinfo{title}{Towards virtual electrical breast
  biopsy: space frequency {MUSIC} for trans-admittance data},
  \bibinfo{journal}{IEEE Trans. Med. Imag.} \bibinfo{volume}{21}
  (\bibinfo{year}{2002}) \bibinfo{pages}{588--595}.

\bibitem[{Zhong and Chen(2007)}]{ZC}
\bibinfo{author}{Y.~Zhong}, \bibinfo{author}{X.~Chen}, \bibinfo{title}{{MUSIC}
  imaging and electromagnetic inverse scattering of multiple-scattering small
  anisotropic spheres}, \bibinfo{journal}{IEEE Trans. Antennas Propag.}
  \bibinfo{volume}{55} (\bibinfo{year}{2007}) \bibinfo{pages}{3542--3549}.

\bibitem[{Joh et~al.(2014)Joh, Kwon, and Park}]{JKP}
\bibinfo{author}{Y.-D. Joh}, \bibinfo{author}{Y.~M. Kwon},
  \bibinfo{author}{W.-K. Park}, \bibinfo{title}{{MUSIC}-type imaging of
  perfectly conducting cracks in limited-view inverse scattering problems},
  \bibinfo{journal}{Appl. Math. Comput.} \bibinfo{volume}{240}
  (\bibinfo{year}{2014}) \bibinfo{pages}{273--280}.

\bibitem[{Joh and Park(2013)}]{JP2}
\bibinfo{author}{Y.-D. Joh}, \bibinfo{author}{W.-K. Park},
  \bibinfo{title}{Structural behavior of the {MUSIC}-type algorithm for imaging
  perfectly conducting cracks}, \bibinfo{journal}{Prog. Electromagn. Res.}
  \bibinfo{volume}{138} (\bibinfo{year}{2013}) \bibinfo{pages}{211--226}.

\bibitem[{Ammari et~al.(2009)Ammari, Bonnetier, and Capdeboscq}]{ABC}
\bibinfo{author}{H.~Ammari}, \bibinfo{author}{E.~Bonnetier},
  \bibinfo{author}{Y.~Capdeboscq}, \bibinfo{title}{Enhanced resolution in
  structured media}, \bibinfo{journal}{SIAM J. Appl. Math.}
  \bibinfo{volume}{70} (\bibinfo{year}{2009}) \bibinfo{pages}{1428--1452}.

\bibitem[{Ammari and Kang(2004)}]{AK2}
\bibinfo{author}{H.~Ammari}, \bibinfo{author}{H.~Kang},
  \bibinfo{title}{Reconstruction of Small Inhomogeneities from Boundary
  Measurements}, vol. \bibinfo{volume}{1846} of \emph{\bibinfo{series}{Lecture
  Notes in Mathematics}}, \bibinfo{publisher}{Springer-Verlag},
  \bibinfo{address}{Berlin}, \bibinfo{year}{2004}.

\bibitem[{Park(2015{\natexlab{b}})}]{P-SUB3}
\bibinfo{author}{W.-K. Park}, \bibinfo{title}{Multi-frequency subspace
  migration for imaging of perfectly conducting, arc-like cracks in full- and
  limited-view inverse scattering problems}, \bibinfo{journal}{J. Comput.
  Phys.} \bibinfo{volume}{283} (\bibinfo{year}{2015}{\natexlab{b}})
  \bibinfo{pages}{52--80}.

\bibitem[{Park(2014)}]{P-SUB1}
\bibinfo{author}{W.-K. Park}, \bibinfo{title}{Analysis of a multi-frequency
  electromagnetic imaging functional for thin, crack-like electromagnetic
  inclusions}, \bibinfo{journal}{Appl. Numer. Math.} \bibinfo{volume}{77}
  (\bibinfo{year}{2014}) \bibinfo{pages}{31--42}.

\bibitem[{Nazarchuk(1994)}]{N}
\bibinfo{author}{Z.~T. Nazarchuk}, \bibinfo{title}{Singular Integral Equations
  in Diffraction Theory}, Mathematics and Applications Series,
  \bibinfo{publisher}{Karpenko Physicomechanical Institute, Ukrainian Academy
  of Sciences}, \bibinfo{address}{Lviv}, \bibinfo{year}{1994}.

\end{thebibliography}

\end{document}